\documentclass[a4paper,12pt]{article}     
\usepackage{amsmath,amscd,amssymb}        
\usepackage{latexsym}                     
\usepackage[english, german]{babel}                

\usepackage{theorem}                 

\textheight 21.5cm
\textwidth 14cm
\oddsidemargin 7mm

\newtheorem{theorem}{Theorem}

\newtheorem{proposition}{Proposition}
\newtheorem{lemma}{Lemma}


\theorembodyfont{\rmfamily}

\newenvironment{definition}
{\smallskip\noindent{\bf Definition\/}:}{\smallskip\par}

\newenvironment{remark}
{\smallskip\noindent{\bf Remark\/}.}{\smallskip\par}


%
\newenvironment{proof}{\begin{ProofwCaption}{Proof}}{\end{ProofwCaption}}
\newenvironment{proof*}[1]{\begin{ProofwCaption}{{#1}}}{\end{ProofwCaption}}
\newenvironment{ProofwCaption}[1]%
 {\addvspace\theorempreskipamount \noindent{\it #1.}\rm}%
 {\qed \par \addvspace\theorempostskipamount}
\newcommand{\qedsymbol}{\mbox{$\Box$}}
\newcommand{\qed}{\hfill\qedsymbol}


\newcommand{\CC}{{\mathbb C}}

\newcommand{\ZZ}{{\mathbb Z}}

\newcommand{\eps}{\varepsilon}

\newcommand{\ind}{{\rm ind}\,}
\newcommand{\indGSV}{{\rm ind}_{\rm GSV}}
\newcommand{\indrad}{{\rm ind}_{\rm rad}}
\newcommand{\Conjsub}{{\rm Conjsub}\,}

\newcommand{\Sing}{{\rm Sing}\,}
\newcommand{\Eu}{{\rm Eu}\,}

\title{An equivariant version of the Euler obstruction}
\author{Wolfgang Ebeling and Sabir M.~Gusein-Zade
\thanks{Partially supported by DFG (Mercator fellowship, Eb 102/8-1), RFBR--13-01-00755
and NSh--5138.2014.1.
Keywords: group action, Euler obstruction, Burnside ring.
AMS 2010 Math. Subject Classification: 32S05, 58E40, 19A22, 58A10.
}
}
\date{}

\begin{document}
\selectlanguage{english}

\maketitle

\begin{abstract}
For a complex analytic variety with an action of a finite group and for an invariant 1-form on it, we give an equivariant version (with values in the Burnside ring of the group) of the local Euler obstruction
of the 1-form and describe its relation with the equivariant radial index defined earlier.
This leads to equivariant versions of the local Euler obstruction of a complex analytic space and of the global Euler obstruction. 
\end{abstract}

\section*{Introduction}
The classical notion of the index of an isolated singular point (zero) of a vector field
on a manifold has a number of generalizations to vector fields and 1-forms on singular varieties, e.g.,
the GSV index of a vector field on an isolated hypersurface
or complete intersection singularity, the Euler obstruction of a 1-form,
the homological index of a vector field or of a 1-form, the radial index of a vector
field or of a 1-form, etc. Surveys of these results can be found in \cite{BSS, Sur}.
The sum of the radial indices of singular points of a vector field or of a 1-form (with only isolated singularities)
on a compact variety is equal to the Euler characteristic of the variety. On a smooth manifold the Euler
obstruction and the radial index of an isolated singular point of a vector field or of a 1-form
coincide with the usual index. Thus the radial index and the Euler obstruction are related to the
Euler characteristic.

There are several equivariant versions of the Euler characteristic for spaces with actions of
a finite group $G$, in particular, the Euler characteristic of the quotient space, the orbifold Euler
characteristic offered by physicists \cite{Vafa} and its higher order generalizations \cite{AS}.
One can also define an equivariant Euler characteristic as an element of the representation ring
of the group $G$. However, a more general concept is the
equivariant Euler characteristic defined as an element of the Burnside ring $B(G)$ of the group $G$
\cite{Verdier, TtD}. The other versions of the Euler characteristic mentioned above are
specializations of this one. This leads to the problem of definition of indices of singular points
of vector fields and of 1-forms on $G$-varieties as elements of the Burnside ring $B(G)$.
For vector fields on (smooth) $G$-manifolds this was done in \cite{Luck}. For the radial index and
for the GSV one this was done in \cite{Equiv_ind}.

Here we give an equivariant version (with values in the Burnside ring $B(G)$) of the local Euler obstruction of a 1-form and describe its relation with the equivariant radial index.
This leads to equivariant versions of the local and global Euler obstructions of a complex analytic variety.
The equivariant version of the local (at a point $x$) Euler obstruction lies in the
Burnside ring of the isotropy group of the point $x$; the equivariant version of the global
Euler obstruction lives in the Burnside ring of the group under consideration.
 
\section{Euler obstruction of a 1-form}\label{sect1}
The Euler obstruction of a (real) 1-form $\omega$ on a germ $(V,x)$ of a complex analytic variety was essentially defined in \cite{MacPh}.

Let $V$ be a complex analytic variety endowed with a Whitney stratification $\{ V_i \}_{i \in I}$ and let $\omega$ be a complex (continuous) 1-form on it (i.e.\ the restriction of a 1-form on an ambient smooth space). A point $x \in V$ is a singular point of the 1-form $\omega$ on $V$ if it is a singular point (zero) of the restriction of the 1-form $\omega$ to the stratum $V_i$ containing $x$. In particular, all zero dimensional strata of the stratification are regarded as singular points of $\omega$.

Assume that the form $\omega$ on $(V,x)$ has an isolated singular point at $x$. Let $B_\eps(x)$ be a ball of a small radius $\eps >0$ around $x$ such that representatives of $(V,x)$ and $\omega$ are defined in $B_\eps(x)$ and $\omega$ has no singular points on $V \setminus \{ x \}$ inside this ball. Let $\nu : \widehat{V} \to V$ be the Nash blow up of the variety $V$ and let $\widehat{T}$ be the Nash bundle over $\widehat{V}$: see, e.g., \cite{MacPh, GD}. The 1-form $\omega$ defines a section $\widehat{\omega}$ of the (real) dual Nash bundle $\widehat{T}^*$ which has no zeros outside of $\nu^{-1}(x)$. The {\em local Euler obstruction} ${\rm Eu}(\omega,V,x)$ of the 1-form $\omega$ on $(V,x)$ is the obstruction to extend the non-zero section $\widehat{\omega}$ from the preimage of the sphere $S_\eps=\partial B_\eps(x)$ to the preimage of the ball $B_\eps(x)$.

The {\em local Euler obstruction} ${\rm Eu}(V,x)$ {\em of the germ} $(V,x)$ of a complex analytic variety is the local Euler obstruction of the radial 1-form $d|r|^2$ on it, where $r$ is the distance to the point $x$. If $\{ V_i \}_{i \in I}$ is a Whitney stratification of a complex analytic variety $V$ and $V_i \subset \overline{V}_j$ then the local Euler obstruction ${\rm Eu}(\overline{V}_j,p)$ at any point $p \in V_i$ does not depend on $p$ (see \cite[Corollaire~10.2]{BS}, \cite[Theorem~8.1.1]{BSS}). We shall denote it by ${\rm Eu}(V_j,V_i)$. It is equal to the local Euler obstruction ${\rm Eu}(N_{ij},p)$ of a normal slice $N_{ij}$ of the variety $\overline{V}_j$ to the stratum $V_i$ (at the point $p$) (see \cite[Section~3]{BLS}). If $V_i \not\subset \overline{V}_j$, we assume ${\rm Eu}(V_j,V_i)$ to be equal to zero.

To be consistent with \cite{GD}, we prefer to consider indices of complex valued 1-forms on a variety.
The difference in the definition emerges from the difference in the orientation of the (complex) dual
space to $\CC^n$ and the orientation of the (real) dual space to $\CC^n$ with its complex structure forgotten
(see, e.g., \cite{Proportionality} or \cite[Example on page 238]{GD}). This leads to the following definitions.

\begin{definition}
 The Euler obstruction and the radial index of a complex valued 1-form $\omega$ on a germ of a purely 
 $n$-dimensional complex analytic variety $(V,x)$ are defined by
  \begin{eqnarray*}
  {\rm Eu}(\omega;V,x)&=&(-1)^n {\rm Eu}({\rm Re}\, \omega;V,x),\\
  \indrad(\omega;V,x)&=&(-1)^n \indrad({\rm Re}\,\omega;V,x),
  \end{eqnarray*}
  where ${\rm Re}\,\omega$ is the real part of the 1-form $\omega$.
\end{definition}

The same (sign) convention should be applied to the definition of the equivariant radial index
in \cite{Equiv_ind}. (There they were defined for real-valued 1-forms.)

\section{Burnside ring}\label{sect2}

Let $G$ be a finite group. Let $\Conjsub G$ be the set of conjugacy classes of subgroups of $G$.
A $G$-set is a set with an action of the group $G$.
A $G$-set is {\em irreducible} if the action of $G$ on it is transitive.
Isomorphism classes of irreducible $G$-sets are in one-to-one correspondence with
elements of $\Conjsub G$: to the conjugacy class $[H]$ containing
a subgroup $H\subset G$ one associates the isomorphism class $[G/H]$
of the $G$-set $G/H$. The {\em Burnside ring} $B(G)$ of $G$ is the
Grothendieck ring of finite $G$-sets, i.e.\
the abelian group generated by the isomorphism classes of finite $G$-sets
modulo the relation $[A\amalg B]=[A]+[B]$ for finite $G$-sets $A$ and $B$. The multiplication
in $B(G)$ is defined by the 
cartesian product. As an abelian group, $B(G)$
is freely generated by the isomorphism classes of irreducible $G$-sets,
i.e.\ each element of $B(G)$ can be written in a unique way as 
$\sum\limits_{[H]\in \Conjsub G}a_{[H]}[G/H]$ with $a_{[H]}\in\ZZ$.
The element $1$ in
the ring $B(G)$ is represented by the $G$-set $[G/G]$ consisting of one point (with the trivial $G$-action).

There is a natural homomorphism from the Burnside ring $B(G)$ to the ring $R(G)$
of representations of the group $G$ which sends a $G$-set $X$ to the (vector)
space of functions on $X$. If $G$ is cyclic, then this homomorphism is injective.
In general, it is neither injective nor surjective.

For a subgroup $H\subset G$ one has two natural homomorphisms of abelian groups:
the {\em restriction map} $\mbox{R}_{H}^{G}: B(G) \to B(H)$, which sends a $G$-set X to the same set considered
with the action of $H$, and the {\em induction map} $\mbox{I}_{H}^{G}: B(H) \to B(G)$, which sends an $H$-set $X$
to the $G$-set $G\times X/(gh, x)\sim (g, hx)$. The first one is a ring homomorphism, the latter one is not.
The reduction homomorphism $\mbox{R}_{\{e\}}^{G}$ sends a virtual $G$-set $A$ to the number $\vert A\vert$
of elements of $A$.

The main requirement for an equivariant version with values in the Burnside ring $B(G)$
of an integer valued invariant is that it reduces to the usual (non-equivariant) one
under the reduction homomorphism $\mbox{R}_{\{e\}}^{G}$.

For a ``sufficiently good'' $G$-space $V$, say, a subanalytic variety, one has a natural
equivariant version of the Euler characteristic with values in the Burnside ring $B(G)$ of the group $G$:
\cite{Verdier, TtD}. It can be defined by the equation
$$
\chi^G(V)=\sum\limits_{[H]\in{\rm Conjsub\,}G}\chi(V^{([H])}/G)[G/H]\,,
$$
where $V^{([H])}$ is the set of points $x\in V$ with the isotropy subgroup $G_x=\{g\in G: gx=x\}$
such that  $[G_x]=[H]$. The reduction homomorphism $R^G_{\{e\}}:B(G)\to B(\{e\})\cong \ZZ$
sends the equivariant Euler characteristic $\chi^G(V) \in B(G)$ to the usual Euler characteristic $\chi(V) \in \ZZ$.

The equivariant radial index of a vector field or of a $1$-form defined in \cite{Equiv_ind} takes values
in the Burnside ring $B(G_x)$ of the isotropy subgroup $G_x$ of an isolated singular point $x$. For a compact subanalytic variety $V$ and for a $G$-invariant 1-form
$\omega$ on it, one has
$$
\sum\limits_{\overline{p}\in({\rm Sing\,}\omega)/G}I_{G_p}^G\left(\indrad^{G_p}(\omega;V,p)\right)=
\chi^G(V)\in B(G)\,,
$$
where $p$ is a point of the $G$-orbit $\overline{p}$.

\section{An equivariant version of the Euler obstruction}\label{sect3}
Let $(V,x)\subset(\CC^N,x)$ be a germ of a complex analytic variety with an action of a finite group $H$
and let $\{V_i\}_{i \in I}$  be an $H$-invariant Whitney stratification of it.
(It is assumed that all the points of a stratum $V_i$ have one and the same conjugacy class $[H_i]$ of the isotropy subgroups and
that $V_i/H$ is connected for each $i$. We assume that $V=\overline{V}_{i_0}$ for some $i_0 \in I$.) The strata $V_i$
are partially ordered: $V_i \prec V_j$ (we shall write $i \prec j$) iff $V_i \subset \overline{V_j}$
and $V_i \neq V_j$; $i \preceq j$ iff $i \prec j$ or $i=j$. 

Let $\omega$ be a germ of an $H$-invariant complex
1-form on $(V,x)$ with an isolated singular point at $x$. Let $B_\eps(x)$ be a ball of a small radius
$\eps$ around $x$ such that representatives of $V$ and of $\omega$ are defined in $B_\eps(x)$
and the 1-form $\omega$ has no singular points on $V\setminus\{x\}$ inside $B_\eps(x)$;
let $S_{\eps}:=\partial B_\eps(x)$.
There exists an $H$-invariant 1-form $\widetilde\omega$ on $V\cap B_\eps(x)$ such that
\begin{enumerate}
 \item[1)] the 1-form $\widetilde\omega$ coincides with $\omega$ on a neighbourhood of $V\cap S_\eps$;
 \item[2)] the 1-form $\widetilde\omega$ has finitely many singular points on $V\cap B_\eps(x)$;
 \item[3)] in a neighbourhood of each singular point $p\in V\cap B_\eps(x)$, $p\in V_i=:V_{(p)}$,
 the 1-form $\widetilde\omega$ is a radial extension of its restriction to the stratum $V_{(p)}$;
\end{enumerate}
see \cite{Equiv_ind}.

\begin{definition}
 The $H$-{\em equivariant local Euler obstruction} of the 1-form $\omega$ on $(V,x)$ is
 \begin{equation}
  \Eu^H(\omega;V,x)=\sum_{\overline{p}\in ({\Sing}\widetilde{\omega})/H}
  (-1)^{\dim{V}-\dim{V(p)}}\Eu(V,V_{(p)})\cdot\ind(\widetilde{\omega}_{\vert V(p)};V_{(p)},p)[Hp]\,,
 \end{equation}
 where $p$ is a point of the orbit $\overline{p}=Hp$, $\ind(\cdot)$ is the usual index of a $1$-form
 on a smooth manifold.
\end{definition}

The fact that the $H$-equivariant local Euler obstruction $\Eu^H(\omega;V,x)$ is well defined
follows, e.g., from \cite[Proposition 1]{Equiv_ind}. Since the (usual) local Euler obstruction
of a 1-form satisfies the law of conservation of number (with respect to a deformation of the 1-form),
one has
$$
\Eu(\omega;V,x)=\sum_{p\in {\rm Sing\,}\widetilde{\omega}}
\Eu(\widetilde{\omega};V,p)\,.
$$
Since, in a neighbourhood of a singular point $p$, the 1-form $\widetilde{\omega}$
is a radial extension of its restriction to the corresponding stratum $V_{(p)}$, one has
$\Eu(\widetilde{\omega};V,p)=\Eu(V,V_{(p)})\cdot\ind(\widetilde{\omega}_{\vert V(p)};V_{(p)},p)$
(see, e.g., \cite{Proportionality}). Therefore
$R^H_{\{e\}}\Eu^H(\omega;V,x)=\Eu(\omega;V,x)$. (This was the main requirement for the definition
of the $H$-equivariant local Euler obstruction with values in the Burnside ring $B(H)$.)

\begin{remark}
 The $H$-equivariant radial index  $\indrad^H(\omega;V,x)$ of the 1-form $\omega$,
 defined (for the real setting) in \cite{Equiv_ind}, is equal to
$$
\sum_{\overline{p}\in ({\Sing}\widetilde{\omega})/H}
(-1)^{\dim{V}-\dim{V(p)}}\ind(\widetilde{\omega}_{\vert V(p)};V_{(p)},p)[Hp]\,.
$$
(Pay attention to the sign the origin of which was described earlier.) Thus the equivariant radial index
and the equivariant local Euler obstruction should be related. 
\end{remark}

Let $\zeta_{ij}$ be the zeta function of the partially ordered set ({\em poset}) of the strata $V_i$:
$$
\zeta_{ij}=
\begin{cases}
1 &{\rm if\ } i\preceq j,\\
0 &{\rm otherwise.}
\end{cases}
$$
and let $\mu_{ij}$ be the M\"obius function of this poset, i.e.\ $\mu_{ij}=0$ for $i\not\preceq j$ and
$\sum_j \zeta_{ij}\mu_{jk}=\delta_{ik}$ (the Kronecker delta): see \cite{Hall}.

\begin{theorem}\label{theo}
For an $H$-invariant complex 1-form $\omega$ on the germ $(V,x)$ one has
  \begin{equation}\label{eqn-theo}
  \Eu^H(\omega;V,x)=\sum_{j}
\left(
\sum_{i}\mu_{ji} \Eu(V, V_i)
\right)
\indrad^H(\omega_{\vert\overline{V}_j};\overline{V}_j,x)\,.
 \end{equation}
\end{theorem}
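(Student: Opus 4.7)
My plan is to express both sides of~\eqref{eqn-theo} as linear combinations in $B(H)$ of the stratum-wise contributions
\[
A_k\ :=\ \sum_{\overline{p}\in(V_k\cap\Sing\widetilde{\omega})/H}\ind(\widetilde{\omega}_{\vert V_k};V_k,p)\,[Hp],
\]
indexed by the strata $V_k$ on which $\widetilde{\omega}$ has singular points, and then to match the coefficient of each $A_k$ via a Möbius inversion on the poset of strata. By the Definition of $\Eu^H$, the LHS equals $\sum_k (-1)^{\dim V-\dim V_k}\Eu(V,V_k)\,A_k$.

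For the RHS, since property~3) in the construction of $\widetilde{\omega}$ is purely local, the restriction $\widetilde{\omega}_{\vert\overline{V}_j}$ inherits the analogous property on $\overline{V}_j$ (whose strata are exactly the $V_i$ with $i\preceq j$) and is an admissible choice of auxiliary form for the Remark's formula applied to the germ $(\omega_{\vert\overline{V}_j};\overline{V}_j,x)$. Hence
\[
\indrad^H(\omega_{\vert\overline{V}_j};\overline{V}_j,x)\ =\ \sum_{k\preceq j}(-1)^{\dim\overline{V}_j-\dim V_k}\,A_k.
\]
Substituting into the RHS of~\eqref{eqn-theo} and interchanging the order of summation, the coefficient of $A_k$ becomes
\[
\sum_{j\succeq k}(-1)^{\dim\overline{V}_j-\dim V_k}\sum_{i}\mu_{ji}\Eu(V,V_i).
\]

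Matching this with the coefficient of $A_k$ from the LHS reduces the theorem to the identity $\sum_{j\succeq k}\sum_i \mu_{ji}\Eu(V,V_i)=\Eu(V,V_k)$, which is a direct Möbius inversion using $\sum_{j:\,k\preceq j}\mu_{ji}=\delta_{ki}$: the double sum equals $\sum_i\Eu(V,V_i)\,\delta_{ki}=\Eu(V,V_k)$. The equivariance plays only a formal role, since each $A_k\in B(H)$ is carried along as a ``scalar'' and the Möbius identity is applied coefficient-wise on the basis $\{[H/H']\}_{[H']\in\Conjsub H}$ of the Burnside ring. The hardest step will be the careful bookkeeping of the two sign factors $(-1)^{\dim V-\dim V_k}$ and $(-1)^{\dim\overline{V}_j-\dim V_k}$ coming from the complex-versus-real convention, verifying that they combine compatibly so that exactly the unsigned Möbius identity above is what is required; the remaining steps---the passage from $\widetilde{\omega}$ to $\widetilde{\omega}_{\vert\overline{V}_j}$ and the swap of summations---are routine.
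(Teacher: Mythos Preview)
Your approach is essentially the paper's: both sides of~\eqref{eqn-theo} are written as linear combinations, in $B(H)$, of the stratum-wise contributions of the auxiliary form $\widetilde\omega$, and the identity between the coefficients follows from M\"obius inversion on the poset of strata. The paper packages this in the reverse direction: it sets
\[
s_i:=(-1)^{\dim V-\dim V_i}\sum_{\overline p\in(V_i\cap\Sing\widetilde\omega)/H}\ind(\widetilde\omega_{|V_i};V_i,p),
\]
writes $\indrad^H(\omega_{|\overline{V}_k};\overline{V}_k,x)=\sum_i s_i[H/H_i]\zeta_{ik}$ and $\Eu^H(\omega;V,x)=\sum_i\Eu(V,V_i)\,s_i[H/H_i]$, M\"obius-inverts the first identity to get $s_i[H/H_i]=\sum_j\mu_{ji}\,\indrad^H(\omega_{|\overline{V}_j};\overline{V}_j,x)$, and substitutes into the second. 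Your ``expand both sides in the $A_k$ and match coefficients via $\sum_{j\succeq k}\mu_{ji}=\delta_{ki}$'' is the same computation read in the other direction.

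The one place where your plan is not just cosmetically different is exactly the step you flag as the hardest. With your sign convention the coefficient of $A_k$ on the right-hand side is
\[
\sum_{j\succeq k}(-1)^{\dim V_j-\dim V_k}\sum_i\mu_{ji}\,\Eu(V,V_i),
\]
and this factor $(-1)^{\dim V_j-\dim V_k}$ depends on the summation variable $j$, so it cannot simply be dropped to leave the unsigned identity $\sum_{j\succeq k}\sum_i\mu_{ji}\,\Eu(V,V_i)=\Eu(V,V_k)$ that you state. The paper avoids this altogether because the sign built into $s_i$ is $(-1)^{\dim V-\dim V_i}$, taken relative to the \emph{fixed} top dimension $\dim V$ rather than to $\dim V_k$; with that normalization no $k$- or $j$-dependent sign appears in either of the two displayed identities, and the M\"obius inversion is clean. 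So rather than hoping the signs ``combine compatibly'', you should absorb the sign into your $A_k$ the way the paper does (i.e., work with $(-1)^{\dim V-\dim V_k}A_k$), after which your coefficient-matching argument goes through verbatim and coincides with the paper's.
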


\begin{proof}
Let
$$
s_i:=(-1)^{\dim{V}-\dim{V_i}}\sum_{\overline{p}\in ({\Sing}\widetilde{\omega}\cap V_i)/H}
\ind(\widetilde{\omega}_{\vert V_i};V_i,p)\,.
$$
As it was mentioned above, $s_i$ does not depend on the choice of the 1-form $\widetilde{\omega}$
possessing the properties 1)-3). One has
\begin{equation}\label{indrad}
\indrad^H(\omega_{\vert\overline{V}_k};\overline{V}_k,x)=\sum_{i} s_i[H/H_i]\zeta_{ik}\,,
\end{equation}
\begin{equation}\label{Euler}
\Eu^H(\omega_{\vert\overline{V}_k};\overline{V}_k,x)=\sum_{i\preceq k} \Eu(V_k,V_i)s_i[H/H_i]\,.
\end{equation}
From (\ref{indrad}) it follows that
$$
s_i[H/H_i]=\sum_j \mu_{ji} \cdot \indrad^H(\omega_{\vert\overline{V}_j};\overline{V}_j,x)\,.
$$
From (\ref{Euler}) one gets
\begin{equation}\label{eqn-strat}
\Eu^H(\omega_{\vert \overline{V}_k};\overline{V}_k,x)=\sum_{j}
\left(
\sum_{i}\mu_{ji} \Eu(V_k, V_i)
\right)
\indrad^H(\omega_{\vert\overline{V}_j};\overline{V}_j,x)\,.
\end{equation}
Applying (\ref{eqn-strat}) to $k=i_0$ ($\overline{V}_{i_0}=V$) one gets Equation (\ref{eqn-theo}).
\end{proof}

Equation (\ref{eqn-theo}) means that the equivariant local Euler obstruction of an $H$-invariant 1-form $\omega$ on $(V,x)$ is a linear
combination of the equivariant radial indices of $\omega$ on the closures $\overline{V}_j$ of the strata
with integer coefficients depending only on the stratification. A similar equation (for $H=\{e\}$) was
obtained in \cite[Corollary 1]{GD} in other terms. 
Let $N_{ij}$ be the normal slice of the variety $\overline{V}_j$ to the stratum $V_i$ (at a point $p$ of it).
(We assume that for $i \preceq j\preceq k$ one has $N_{ij}=N_{ik}\cap \overline{V}_j$.)
Let $n_{ij}$ be defined by
$$
n_{ij}:=\indrad(d\ell; N_{ij}, p)
$$
for the differential $d\ell$ of a generic linear function $\ell: \CC^N \to \CC$, and let $m_{ij}$ be the M\"obius
inverse of the function $n_{ij}$ on the poset of the strata $V_i$ (see \cite{Hall}). In \cite[Corollary 1]{GD} the
following equation was obtained:
\begin{equation}\label{old}
 {\rm Eu}(\omega; V,0)=\sum_j m_{ji_0}\cdot\indrad(\omega; \overline{V}_j,0)\,.
\end{equation}
The invariants used there and the proof cannot be
adapted to the equivariant setting: the deformation of the 1-form considered there will not, in general,
be $H$-invariant. The following statement establishes the relation between Equations (\ref{eqn-theo}) and (\ref{old}).

\begin{proposition}
 One has
 \begin{equation}\label{eqn-prop}
 \sum_i \mu_{ji}{\rm Eu}(V_k,V_i)=m_{jk}.  
 \end{equation}
\end{proposition}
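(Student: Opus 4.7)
The plan is to derive (\ref{eqn-prop}) by equating two distinct expressions for the classical (non-equivariant) local Euler obstruction $\Eu(\omega;\overline V_k,0)$ of an arbitrary complex 1-form $\omega$ with isolated singular point at $0$.

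First I would specialize equation (\ref{eqn-strat}), established inside the proof of Theorem~\ref{theo}, to $H=\{e\}$ (equivalently, apply the reduction homomorphism $\mbox{R}_{\{e\}}^{H}$). This yields
$$\Eu(\omega;\overline V_k,0)=\sum_{j}\Bigl(\sum_{i} \mu_{ji}\,\Eu(V_k,V_i)\Bigr)\,\indrad(\omega;\overline V_j,0).$$
Second, I would apply the previously known formula (\ref{old}) with $V$ replaced by $\overline V_k$ carrying its induced stratification $\{V_i:i\preceq k\}$ (whose ``top'' index becomes $k$ instead of $i_0$). The compatibility assumption $N_{ij}=N_{ik}\cap\overline V_j$ ensures that the numbers $n_{ij}$ computed inside $\overline V_k$ coincide with those of the original stratification, and since $\{i:i\preceq k\}$ is an order ideal in the poset, the corresponding Möbius inverses $m_{jk}$ are unchanged as well. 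Thus
$$\Eu(\omega;\overline V_k,0)=\sum_{j\preceq k} m_{jk}\,\indrad(\omega;\overline V_j,0).$$

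Subtracting the two expressions gives
$$\sum_{j\preceq k}\Bigl(\sum_{i}\mu_{ji}\,\Eu(V_k,V_i)-m_{jk}\Bigr)\,\indrad(\omega;\overline V_j,0)=0$$
for every 1-form $\omega$ with isolated singular point at $0$. The equation (\ref{eqn-prop}) is trivial when $j\not\preceq k$, since then both sides vanish (using $\Eu(V_k,V_i)=0$ for $V_i\not\subset\overline V_k$ and the convention $m_{jk}=0$ for $j\not\preceq k$). It therefore remains to exhibit, for each $l\preceq k$, a 1-form $\omega^{(l)}$ so that the matrix $\bigl(\indrad(\omega^{(l)};\overline V_j,0)\bigr)_{j,l\preceq k}$ is nonsingular. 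By the construction of the auxiliary 1-form $\widetilde\omega$ recalled in Section~\ref{sect3}, one is free to prescribe the per-stratum quantities $s_i$ from the proof of Theorem~\ref{theo} arbitrarily: planting a single nondegenerate zero of chosen index on $V_l$ and no zeros on other strata realizes $s_i=\delta_{il}$. By formula (\ref{indrad}) the corresponding radial-index matrix is then the (unitriangular) zeta matrix of the sub-poset $\{i:i\preceq k\}$, hence invertible. Consequently each bracketed coefficient must vanish, yielding (\ref{eqn-prop}).

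The main technical point is the independence argument in the final step: one needs to verify that the radial-extension recipe of Section~\ref{sect3} really does permit an independent prescription of the per-stratum indices $s_i$. This is standard: the desired $\omega^{(l)}$ can be assembled from a local model 1-form supported near one chosen point of $V_l$ together with a radial extension outward, exactly as used to build $\widetilde\omega$, so no essentially new ingredient beyond Section~\ref{sect3} is required.
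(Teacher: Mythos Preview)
Your overall strategy---compare the coefficient formula coming from (\ref{eqn-strat}) with the one from (\ref{old}) and then argue that the radial-index vectors span---is different from the paper's proof, and the final ``independence'' step contains a genuine gap.

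The problem is your claim that ``one is free to prescribe the per-stratum quantities $s_i$ arbitrarily''. This is not what the construction in Section~\ref{sect3} provides. Given a fixed germ $\omega$ with an isolated singular point at $0$, the auxiliary form $\widetilde\omega$ is a deformation of $\omega$ satisfying conditions 1)--3), and the paper explicitly notes that the resulting $s_i$ \emph{do not depend on the choice of $\widetilde\omega$}. In other words, the $s_i$ are invariants of $\omega$, not parameters you may adjust by choosing $\widetilde\omega$ cleverly. Your proposed construction (``plant a single nondegenerate zero on $V_l$ and radially extend'') produces a $1$-form whose unique singular point lies on $V_l$, not at the origin; such a form is outside the scope of both (\ref{eqn-strat}) and (\ref{old}), which concern germs with an isolated singularity \emph{at $0$}. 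Concretely, for the zero-dimensional stratum $V_{i_*}=\{0\}$ the restriction of any $\widetilde\omega$ has index $1$, so $s_{i_*}=(-1)^{\dim V}$ for every admissible $\omega$; thus $s_i=\delta_{il}$ with $l\ne i_*$ is never realised. One can try to rescue the argument via an affine-span trick, but you would still need to show that the remaining $s_i$ (for positive-dimensional strata) can be varied independently, and nothing in Section~\ref{sect3} gives this.

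The paper sidesteps the spanning problem entirely: it passes to normal slices $N_{ik}$, inverts the relation to compare $\widetilde n_{jk}$ with $n_{jk}$, and evaluates on the single $1$-form $d\ell$ for a generic linear $\ell$, using the key fact ${\rm Eu}(d\ell_{|N_{ij}};N_{ij},p)=\delta_{ij}$. This turns the comparison into a direct computation rather than a linear-independence argument.
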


\begin{proof}
 Let us denote the left hand side of (\ref{eqn-prop}) by $\widetilde{m}_{jk}$ and let $\widetilde{n}_{ij}$
 be the M\"obius inverse of the function $\widetilde{m}_{jk}$ on the poset of strata. For $H=\{e\}$,
 Theorem~\ref{theo} applied to the normal slice $N_{ik}$ of $\overline{V}_k$ to the stratum $V_i$
 (at a point $p \in V_i$) gives
 $$
 {\rm Eu}(\omega_{\vert N_{ik}};N_{ik},p)=
 \sum_j \widetilde{m}_{jk} \cdot \indrad(\omega_{\vert N_{ij}};N_{ij},p)\,.
 $$
 This implies that
 $$
 \indrad(\omega_{\vert N_{ik}};N_{ik},p)=
 \sum_j \widetilde{n}_{jk} \cdot  {\rm Eu}(\omega_{\vert N_{ij}};N_{ij},p)\,.
 $$
 For a generic linear function $\ell$ one has
 $$
 {\rm Eu}(d\ell_{\vert N_{ij}};N_{ij},p)=
 \begin{cases}
  0 & {\rm if \ } i\prec j,\\
  1 & {\rm if \ } i=j.
 \end{cases}
 $$
 Therefore
 $$
 \widetilde{n}_{jk}=\indrad(d\ell_{\vert N_{jk}};N_{jk},p)=n_{jk}\,.
 $$
\end{proof}

One can define the equivariant local Euler obstruction ${\rm Eu}^H(\eta;V,x)$ of a real $1$-form $\eta$
on $(V,x)$. As it was explained in Section~\ref{sect1}, it should be defined as
$(-1)^{\dim V}{\rm Eu}^H(\eta^{\CC};V,x)$ for the complexification $\eta^{\CC}$ of the 1-form $\eta$.
In particular, the equivariant local Euler obstruction ${\rm Eu}^H(V,x)$ of a complex analytic space
$V$ at a point $x$ is the equivariant local Euler obstruction ${\rm Eu}^H(d\vert r\vert^2;V,x)$
of the (real) radial $1$-form $d\vert r\vert^2$.

In this way the notion of the global Euler obstruction of a quasi-projective subvariety $V$ in $\CC^N$
defined (in the non-equivariant setting), e.g., in \cite{STV} has a natural equivariant generalization
for a $G$-invariant subvariety $V$ as an element of the Burnside ring $B(G)$:
$$
{\rm Eu}^G(V)=\sum_{\overline{p}\in ({\rm Sing\,}\eta_{\rm rad})/G} I^G_{G_p} ( {\rm Eu}^{G_p}(\eta_{\rm rad};V,p))
\in B(G)
$$
for a real $1$-form $\eta_{\rm rad}$ on $\CC^N$ which is radial at infinity.
In the obvious way one gives the definition of the global Euler obstruction for compact analytic varieties
and gets an equivariant version of it as an element of the Burnside ring. The definition of the
equivariant global Euler obstruction (and of the local one as well) cannot be obtained by a slight
modification of the usual one because of the lack of a notion of the topological obstruction 
as an element of the Burnside ring in an equivariant setting.

Let $V$ be either a closed quasi-projective variety in $\CC^N$ or a compact complex analytic variety and let $\{W_j\}_{j \in J}$ be a $G$-invariant Whitney stratification of $V$.

\begin{proposition} \label{prop:globalEuler}
One has
\begin{equation}
{\rm Eu}^G(V) = \sum_j {\rm Eu}(V,W_j) \cdot \chi^G(W_j) \label{eqn-Glob}
\end{equation}
\end{proposition}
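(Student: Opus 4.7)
The plan is to reduce the identity to the equivariant Poincar\'e--Hopf formula of Section~\ref{sect2} by a careful choice of the $1$-form appearing in the definition of ${\rm Eu}^G(V)$. Choose a generic $G$-invariant real $1$-form $\eta$ with only isolated zeros on $V$, radial at infinity, and, near each zero $p$, a radial extension of its restriction to the stratum $W_{(p)}$ containing $p$; such a form exists by the construction of \cite{Equiv_ind} already used to define both ${\rm Eu}^{G_p}$ and $\indrad^{G_p}$. By the very definition,
$$
{\rm Eu}^G(V) = \sum_{\overline{p}\in({\rm Sing\,}\eta)/G} I^G_{G_p}\bigl({\rm Eu}^{G_p}(\eta;V,p)\bigr).
$$

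At each zero $p$, the form $\eta$ is already radial, so the sum defining ${\rm Eu}^{G_p}(\eta;V,p)$ collapses to the single term indexed by the orbit $\{p\}$ of $p$ inside $G_p$. Comparing this term with the corresponding term in the formula for the equivariant radial index recalled in the Remark of Section~\ref{sect3} (and using the sign convention of Section~\ref{sect1} relating real and complex $1$-forms, which inserts two cancelling factors of $(-1)^{\dim V}$ and $(-1)^{\dim W_{(p)}}$), they differ only by the constant integer factor $\Eu(V,W_{(p)})$; this is the equivariant counterpart of the multiplicativity of the local index under radial extension. Hence
$$
{\rm Eu}^{G_p}(\eta;V,p) = \Eu(V,W_{(p)})\cdot\indrad^{G_p}(\eta|_{W_{(p)}}; W_{(p)}, p).
$$

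Since the stratification is $G$-invariant, each orbit $\overline{p}$ lies in a unique stratum $W_j$, and the integer $\Eu(V,W_j)$ does not depend on the chosen point of $W_j$. Grouping the orbits by their stratum therefore gives
$$
{\rm Eu}^G(V) = \sum_j \Eu(V,W_j)\sum_{\overline{p}\in({\rm Sing\,}\eta\cap W_j)/G} I^G_{G_p}\bigl(\indrad^{G_p}(\eta|_{W_j}; W_j, p)\bigr).
$$
The form $\eta|_{W_j}$ is a $G$-invariant $1$-form on the smooth $G$-manifold $W_j$, with isolated zeros and radial at infinity. Applying the equivariant Poincar\'e--Hopf identity of Section~\ref{sect2} (extended in the standard way to the non-compact, radial-at-infinity setting of \cite{Equiv_ind}) identifies the inner sum with $\chi^G(W_j)$, and (\ref{eqn-Glob}) follows.

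The only delicate point is justifying the Poincar\'e--Hopf-type identity on a possibly non-compact stratum $W_j$: this is implicit both in the use of forms radial at infinity in the definition of ${\rm Eu}^G(V)$ and in the equivariant radial index construction of \cite{Equiv_ind}, and can be formalised by stratifying $\overline{W_j}$ and observing that the radial behaviour of $\eta$ along $\overline{W_j}\setminus W_j$ produces no interior zeros on $W_j$.
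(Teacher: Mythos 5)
Your argument is correct and follows essentially the same route as the paper: take a $G$-invariant form that is a radial extension of its restriction to the stratum near each zero, factor ${\rm Eu}(V,W_j)$ out of each local contribution, and identify the remaining sum over zeros in $W_j$ with $\chi^G(W_j)$. The ``delicate point'' you flag at the end is precisely what the paper isolates as Lemma~\ref{lem:Hopf} (Poincar\'e--Hopf for a $1$-form pointing inwards on the manifold with boundary $W_j\setminus T_j$, $T_j$ a tubular neighbourhood of the adjacent strata); note that the correct justification there is the boundary behaviour of the radial extension (and of $-\eta_{\rm rad}$ at infinity in the quasi-projective case), not merely the absence of zeros of $\eta|_{W_j}$ near $\overline{W_j}\setminus W_j$.
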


We shall use the following lemma. Let $M$ be a (real) manifold with boundary $\partial M$. A 1-form $\eta$ on $M$ will be called {\em pointing inwards} if, at each point $p \in \partial M$ such that the restriction of $\eta_p$ to $\partial M$ vanishes, the value of $\eta_p$ on a vector pointing inwards $M$ is positive.

\begin{lemma} \label{lem:Hopf}
Let $\eta$ be a 1-form on $M$ pointing inwards with only isolated singular points. Then one has
$$
\sum_{p \in {\rm Sing}\, \eta} \ind(\eta; M,p) = \chi(M \setminus \partial M).
$$
\end{lemma}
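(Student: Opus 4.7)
The plan is to reduce to the classical Poincar\'e-Hopf theorem by a small deformation of $\eta$ in a collar of $\partial M$ that preserves the total index. First, I would fix a Riemannian metric on $M$ and pass to the vector field $X$ dual to $\eta$, which has the same zeros and the same indices. The hypothesis on $\eta$ translates to: at every $p\in\partial M$, $X_p$ is not a non-negative multiple of the outward unit normal $\nu_{\rm out}(p)$; in particular $X$ does not vanish on $\partial M$.

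Next, I would pick a collar $U\cong\partial M\times[0,\delta)$ with inward coordinate $t$. For $\delta>0$ small enough, by continuity and compactness of $\partial M$, $X$ is nowhere zero on $U$ and $X_{(p,t)}$ is never a positive multiple of $\nu_{\rm out}(p)$ on $U$. Choose a smooth bump $\beta\colon[0,\delta]\to[0,1]$ with $\beta(0)=1$ and $\beta(t)=0$ for $t\geq\delta/2$, and set
$$
\widetilde X(p,t)=\beta(t)\,\nu_{\rm in}(p)+(1-\beta(t))\,X(p,t) \text{ on } U, \qquad \widetilde X=X \text{ outside } U.
$$
If $\widetilde X$ vanished at some $(p,t)\in U$ with $\beta\in(0,1)$, then $X_{(p,t)}$ would equal the positive multiple $(\beta/(1-\beta))\,\nu_{\rm out}(p)$, contradicting the choice of $\delta$. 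Hence $\widetilde X$ has exactly the same singular points and indices as $X$, and now $\widetilde X|_{\partial M}=\nu_{\rm in}$ is strictly inward-pointing.

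Finally, I would apply the classical Poincar\'e-Hopf theorem to the modified field: a vector field strictly inward on the boundary has total index equal to $\chi(M)-\chi(\partial M)=\chi(M\setminus\partial M)$ (in the additive / compactly supported sense). This in turn is obtained from the usual outward Poincar\'e-Hopf applied to $-\widetilde X$, where each local index is multiplied by $(-1)^{\dim M}$, combined with the identity $\chi(\partial M)=(1-(-1)^{\dim M})\chi(M)$ valid for any compact manifold with boundary.

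The main technical point is the construction of $\widetilde X$: one has to check that the interpolation with $\nu_{\rm in}$ in the collar does not create any new zeros. The key observation that makes this work is that the pointing-inwards hypothesis is an open condition at each boundary point and, by compactness of $\partial M$, persists on a sufficiently narrow collar, which is precisely what forces the convex combination to remain nonvanishing there.
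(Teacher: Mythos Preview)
Your argument is correct. The paper does not actually supply a proof of this lemma: it states the lemma and immediately passes to the proof of Proposition~\ref{prop:globalEuler}, treating the lemma as a known variant of the Poincar\'e--Hopf theorem for manifolds with boundary. So there is nothing in the paper to compare your proof against; what you have written is exactly the standard reduction one would give.

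A couple of minor points worth tightening. First, the paper does not say explicitly that $M$ is compact, but this is implicitly assumed (in the application, $M=\stackrel{\circ}{W}_j$ is compact); you use compactness of $\partial M$ and should say so. Second, in the collar you compare $X_{(p,t)}$ with $\nu_{\rm out}(p)$, which lives at the boundary point $p$; you are implicitly extending $\nu_{\rm out}$ to the collar as $-\partial_t$ via the product structure. This is harmless, but it is worth making explicit. With these clarifications, the interpolation argument is airtight: on $\partial M$ the normalized field $X/|X|$ avoids the single point $\nu_{\rm out}$, which is a closed condition and therefore persists on a thin collar by uniform continuity, forcing the convex combination $\beta\,\nu_{\rm in}+(1-\beta)\,X$ to remain nonzero there.

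Your concluding identification $\chi(M\setminus\partial M)=\chi(M)-\chi(\partial M)=(-1)^{\dim M}\chi(M)$ is the right interpretation here: throughout the paper the Euler characteristic is the additive (compactly supported) one, as is customary in this literature, and with that convention the inward-pointing version of Poincar\'e--Hopf gives precisely $\chi(M\setminus\partial M)$.
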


\begin{proof*}{Proof of Proposition~\ref{prop:globalEuler}}
We shall write the proof for a compact complex analytic variety. For a (closed) subvariety in $\CC^N$ one can apply the same arguments taking into account that, in the definition of the global Euler obstruction, instead of a 1-form $\eta_{\rm rad}$ which is radial at infinity one can use the 1-form $-\eta_{\rm rad}$. It is sufficient to prove (\ref{eqn-Glob}) for a subpartition of the stratification $\{W_j\}_{j \in J}$. Therefore one can assume that the isotropy subgroups of the points of a stratum $W_j$ are all conjugate to the same subgroup $H_j$. To compute the global Euler obstruction one can use a $G$-invariant 1-form $\eta$ which, in a neighbourhood of each singular point is the radial extension of its restriction to the corresponding stratum. For each stratum $W_j$ let $T_j$ be a small tubular neighbourhood of the union of adjacent strata. Its complement $\stackrel{\circ}{W}_j := W_j \setminus T_j$ can be considered as a manifold with boundary. According to Lemma~\ref{lem:Hopf} the sum of the indices of $\eta_{|\stackrel{\circ}{W}_j}$ is equal to $\chi(W_j)$. Therefore the number of orbits of singular points of $\eta$ counted with multiplicities is equal to $\chi(W_j/G)$. According to the definition of the equivariant Euler obstruction one has
\begin{eqnarray*}
{\rm Eu}^G(V) & = & \sum_j \sum_{\overline{p} \in ({\rm Sing}\, \eta \cap W_j)/G} {\rm Eu}(V,W_j) \cdot \ind(\eta; W_j,p)[G/H_j] \\
& = & \sum_j {\rm Eu}(V,W_j) \cdot \chi(W_j/G)[G/H_j] \\
 & = & \sum_j {\rm Eu}(V,W_j) \chi^G(W_j).
\end{eqnarray*}
\end{proof*}

Let $(V,x)\subset(\CC^N,x)$ be an $H$-invariant $n$-dimensional isolated complete intersection singularity
defined by $H$-invariant equations. In \cite{Equiv_ind}, there was defined the notion of the
equivariant GSV index $\indGSV^H(\omega;V,x)$ of an $H$-invariant $1$-form.
Both the equivariant local Euler obstruction and the equivariant GSV index satisfy the law of conservation of
number and they coincide on a smooth manifold. This implies that the difference 
${\rm Eu}^H(\omega;V,x)-\indGSV^H(\omega;V,x)$ does not depend on the $1$-form $\omega$.
Comparing this difference with the same one for the radial 1-form, one gets the following statement.

\begin{proposition}
 One has
 $$
 {\rm Eu}^H(\omega;V,x)=\indGSV^H(\omega;V,x)+(-1)^n{\rm Eu}^H(V,x)-(-1)^n \chi^H(M)\,,
 $$
 where $M$ is the Milnor fibre of the isolated complete intersection singularity $(V,x)$.
\end{proposition}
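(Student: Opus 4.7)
The plan is to follow exactly the strategy sketched in the two sentences preceding the proposition. Both ${\rm Eu}^H(\cdot;V,x)$ and $\indGSV^H(\cdot;V,x)$ obey an $H$-equivariant law of conservation of number with respect to deformations of the $1$-form (for $\indGSV^H$ this is established in \cite{Equiv_ind}; for ${\rm Eu}^H$ it follows from the formula of Theorem~\ref{theo} combined with the analogous property of $\indrad^H$). Moreover, at a smooth point of $V$ both invariants reduce to the standard $H$-equivariant index of a $1$-form on an $H$-manifold. Since $(V,x)$ is an ICIS, the set $V\setminus\{x\}$ is smooth, so for two $H$-invariant complex $1$-forms $\omega_0,\omega_1$ with isolated singularity at $x$ and a generic $H$-equivariant homotopy $\omega_t$ between them, all extra singular orbits produced along the homotopy lie in the smooth part of $V$; at each such orbit $H p$ the contributions to $\Eu^H$ and to $\indGSV^H$ are the same element of $B(H)$ (namely $\ind(\omega_t;V,p)\,[H/H_p]$), and hence cancel in the difference.

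It follows that ${\rm Eu}^H(\omega;V,x)-\indGSV^H(\omega;V,x)\in B(H)$ depends only on the germ $(V,x)$, not on the $1$-form $\omega$. To compute this common value I would test it on (the complexification of) the radial $1$-form $\eta_{\rm rad}=d|r|^2$. By the very definition of $\Eu^H(V,x)$ recalled in the preceding section one has
$$
\Eu^H(\eta_{\rm rad};V,x)=(-1)^n \Eu^H(V,x).
$$
For the GSV side I would use the equivariant analogue of the classical identity $\indGSV(\eta_{\rm rad};V,x)=\chi(M)$: since a representative of the GSV index of $\eta_{\rm rad}$ on an ICIS can be realised as the sum of indices of an $H$-invariant radial $1$-form on the Milnor fibre $M$ (a compact $H$-manifold with boundary on which the $1$-form points outwards), the equivariant Poincaré--Hopf theorem in $B(H)$ from \cite{Luck}/\cite{Equiv_ind} gives
$$
\indGSV^H(\eta_{\rm rad};V,x)=(-1)^n\chi^H(M),
$$
the $(-1)^n$ coming from the complex-vs-real sign convention of Section~\ref{sect1}. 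Subtracting yields $(-1)^n\Eu^H(V,x)-(-1)^n\chi^H(M)$, which is the asserted formula.

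The main obstacle is the first step: justifying that the law of conservation of number for both invariants holds at the level of the Burnside ring $B(H)$ and not merely of $\ZZ$. One needs to know that the $H$-equivariant perturbation argument can be arranged so that (i) at each newly created singular orbit $Hp\subset V\setminus\{x\}$ the isotropy subgroup $H_p$ is well defined and recorded identically by both invariants, and (ii) the local equivariant indices at such orbits agree with the usual equivariant index of a $1$-form on a smooth $H$-manifold. Given the definitions of $\Eu^H$ (above) and $\indGSV^H$ (in \cite{Equiv_ind}), both of which are built by summing local indices $\ind(\widetilde\omega_{|V_{(p)}};V_{(p)},p)[Hp]$ over orbits, this reduces to the corresponding non-equivariant conservation-of-number properties applied stratum by stratum, which is the content of the cited lemmas.
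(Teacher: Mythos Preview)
Your proposal is correct and follows exactly the paper's own argument, which consists entirely of the two sentences you quote: the paper does not give a separate proof beyond that sketch. You have simply fleshed out the two steps (conservation-of-number independence of $\omega$, then evaluation on the radial form) and supplied the computation $\indGSV^H(\eta_{\rm rad};V,x)=(-1)^n\chi^H(M)$ that the paper leaves implicit.
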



\bigskip
\noindent Leibniz Universit\"{a}t Hannover, Institut f\"{u}r Algebraische Geometrie,\\
Postfach 6009, D-30060 Hannover, Germany \\
E-mail: ebeling@math.uni-hannover.de\\

\medskip
\noindent Moscow State University, Faculty of Mechanics and Mathematics,\\
Moscow, GSP-1, 119991, Russia\\
E-mail: sabir@mccme.ru


\begin{thebibliography}{10}
\bibitem{AS} M.~Atiyah, G.~Segal:
On equivariant Euler characteristics. 
J. Geom. Phys. {\bf 6}, no.4, 671--677 (1989).

\bibitem{BLS} J.-P.~Brasselet, L\^e D.~T., J.~Seade: Euler obstruction
and indices of
vector fields. Topology {\bf 39}, 1193--1208 (2000).

\bibitem{BS} J.-P.~Brasselet, M.-H.~Schwartz: Sur les classes de Chern
d'un ensemble analytique complexe. In: Caract\'eristique d'Euler-Poincar\'e,
Ast\'erisque {\bf 82--83}, 93--147 (1981).

\bibitem{Proportionality} J.-P.~Brasselet, J.~Seade, T.~Suwa:
Proportionality of indices of 1-forms on singular varieties.
In: Singularities in geometry and topology 2004, 49--65,
Adv. Stud. Pure Math., 46, Math. Soc. Japan, Tokyo, 2007.

\bibitem{BSS} J.-P.~Brasselet, J.~Seade, T.~Suwa: Vector fields on singular varieties.
Lecture Notes in Mathematics, Vol.~1987. Springer-Verlag, Berlin, 2009.


\bibitem{Vafa} L.~Dixon, J.~Harvey, C.~Vafa, E.~Witten:
Strings on orbifolds. Nucl. Phys. {\bf B 261}, 678--686 (1985), II. Nucl. Phys. {\bf B 274}, 285--314 (1986).

\bibitem{GD} W.~Ebeling, S.M.~Gusein-Zade: Radial index and Euler obstruction of a 1-form
on a singular variety. Geom. Dedicata {\bf 113}, 231--241 (2005).

\bibitem{Sur} W.~Ebeling, S.M.~Gusein-Zade: Indices of vector fields and 1-forms on singular varieties. In: Global aspects of complex geometry, 129--169,
Springer, Berlin, 2006.


\bibitem{Equiv_ind} W.~Ebeling, S.M.~Gusein-Zade:
Equivariant indices of vector fields and 1-forms.
arXiv: 1307.2054.




\bibitem{Hall} M.~Hall Jr.: Combinatorial theory. Second edition.
Wiley-Interscience Series in Discrete Mathematics. A Wiley-Interscience Publication.
John Wiley \& Sons, Inc., New York, 1986.


\bibitem{Luck} W.~L\"uck, J.~Rosenberg: The equivariant Lefschetz fixed point theorem
for proper cocompact $G$-manifolds. In: High-dimensional manifold topology, 322--361,
World Sci. Publ., River Edge, NJ, 2003.

\bibitem{MacPh} R.~MacPherson: Chern classes for
singular varieties. Annals of Math. \textbf{100}, 423--432 (1974).



\bibitem{STV} J.~A.~Seade, M.~Tibar, A.~Verjovsky: Global Euler obstruction and polar invariants. Math. Ann. {\bf 333}, 393--403 (2005).

\bibitem{TtD} T.~tom Dieck: Transformation groups and representation theory.
Lecture Notes in Mathematics, 766, Springer, Berlin, 1979.

\bibitem{Verdier} J.-L.~Verdier: Caract\'eristique d'Euler-Poincar\'e. Bull. Soc. Math. France {\bf 101},
441--445 (1973).


\end{thebibliography}
\end{document}